\begin{document}

\newtheorem{definition}{Definition}[section]
\newtheorem{theorem}[definition]{Theorem}
\newtheorem{lemma}[definition]{Lemma}
\newtheorem{proposition}[definition]{Proposition}
\newtheorem{corollary}[definition]{Corollary}

\theoremstyle{definition}
\newtheorem{definitions}[definition]{Definitions}
\newtheorem{remark}[definition]{Remark}
\newtheorem{remarks}[definition]{Remarks}
\newtheorem{example}[definition]{Example}
\newtheorem{examples}[definition]{Examples}
\newtheorem{leeg}[definition]{}
\newtheorem{comments}[definition]{Some comments}
\def\square{\Box}
\newtheorem{observation}[definition]{Observation}
\newtheorem{defsobs}[definition]{Definitions and Observations}
\newenvironment{prf}[1]{ \trivlist
\item[\hskip \labelsep{\it
#1.\hspace*{.3em}}]}{~\hspace{\fill}~$\square$\endtrivlist}

\title{Order one equations with the Painlev\'e property}
\author{Georg Muntingh $\ $ and $\ $ Marius van der Put\\
\footnotesize Department of Mathematics, University of Groningen,
 P.O.Box 800,\\
\footnotesize 9700 AV Groningen, The Netherlands, georg.muntingh@gmail.com and mvdput@math.rug.nl}
\date{}
\maketitle
\noindent

\begin{abstract}
\noindent Differential equations with the Painlev\'e property have been studied extensively due to their appearance in many branches of mathematics and their applicability in physics.
Although a modern, differential algebraic treatment of the order one equations appeared before, the connection with the classical theory did not.
Using techniques from algebraic geometry we provide the link between the classical and the modern treatment, and with the help of differential Galois theory a new classification is derived, both for characteristic 0 and $p$.
\end{abstract}

\section{Introduction}
The solutions $y=y(z)$ of a complex differential equation of order one of the form $f(y',y,z)=0$, where $y':=\frac{dy}{dz}(z)$ and $f$ is a polynomial in the first two variables and a rational or algebraic or meromorphic function in the last variable, can have poles, branch points or essential singularities. The equation $f$ is said to have {\em movable singularities} if the set of branch points or the set of essential singularities is not discrete (note that the literature contains various other definitions). In the opposite case $f$ is said to have the Painlev\'e property (PP). The purpose of this paper is to give a precise classification of the order one equations with the PP, including complete proofs.

The classical literature starts halfway the nineteenth century with a series of papers by Briot and Bouquet \cite{BriotBouquet2,BriotBouquet1} in which they treat the autonomous case, i.e., $f(y',y) = 0$. The general case was solved by L. Fuchs \cite{Fuchs}, H. Poincar\'e \cite{Poincare} and P. Painlev\'e \cite{Painleve1888}, and essentially correct statements can be found in Painlev\'e's \emph{Stockholm Lectures} \cite{Painleve} and in an article by J. Malmquist \cite{Malmquist}. The classical proofs of Malmquist are, however, incomprehensible for the modern reader. Furthermore, Ince \cite{Ince}, Bieberbach \cite{Bieberbach} and Hille \cite{Hille} discuss first order equations with the PP. Unfortunately, their statements and proofs are far from complete and precise.

In 1980 M. Matsuda published a book \cite{Matsuda} on first order algebraic differential equations that deserves to be better known. For any characteristic, Matsuda gives a purely algebraic definition for a differential field to have `no movable singularities,' and then gives an essentially correct description of differential fields with this property. However, the link with the above definition of `no movable singularities' is not hinted at.

Now we describe the method of this paper. In the autonomous case, the equation
$f(y,y')=0$ defines a curve $X$ over $\mathbb{C}$ and a $\mathbb{C}$-linear
derivation $D$ on its field of functions $\mathbb{C}(X)$. In other words,
$D$ is a (meromorphic) vector field on $X$. The condition that $f$ has only
finitely many branch points turns out to be equivalent to `$D$ has no poles'. 
The pairs $(X,D)$ with this property are easily classified and have the PP.

Consider a differential field $(K,\frac{d}{dz})$ which is a finite extension
of $(\mathbb{C}(z),\frac{d}{dz})$ and thus $K$ is the function field of a 
curve $Z$ over $\mathbb{C}$. (In fact, the proofs are valid for more general
differential fields).
 An equation $f(y,y')=0$ over $K$ defines a
curve $X$ over $K$ and a derivation $D$ on its field of functions $K(X)$
extending $\frac{d}{dz}$ on $K$. In general, $D$ has finitely many poles on
$X$, i.e., closed points $P$ of $X$ such that $D(O_{X,P})\not \subset
O_{X,P}$. In particular $D$ can be interpreted as a meromorphic connection on
the line bundle $O_X$. Prop. 4.2 states that there are infinitely many branch
points for $f$ on $Z$ if $D$ has a pole (this is missing in \cite{Matsuda}). 

\smallskip

We illustrate this by the example $(y')^2=y-z^2$ over the differential field 
$K=(\mathbb{C}(z),\ \frac{d}{dz})$. Write $t=y,\ s=y'$ and consider the
differential field $F=\mathbb{C}(z)(s,t)$ with equation $s^2=t-z^2$ and
with the $\mathbb{C}$-linear derivation $D$ given by $D(z)=1,\ D(t)=s$.
Thus $F=\mathbb{C}(z)(s)$ and $D(s)=\frac{1}{2}-\frac{z}{s}$. We view $F$
as the function field of the curve $X=\mathbb{P}^1_K$, i.e., the projective
line over $K$. The derivation $D$ has a pole at the point $s=0$, which means
that the local ring $O_{X,0}=K[s]_{(s)}$ at the point $s=0$ is mapped outside
itself (indeed, $D(s)\not \in O_{X,0}$). This is the only pole of $D$ because
for $u=1/s$ one has $D(u)=-u^2/2+zu^3$. Prop. 4.2 claims the existence of
infinitely many values $z_0$ such that there exists a branched
solution passing through $z_0$, i.e., a solution $y$ using roots of $(z-z_0)$.

 We verify this explicitly be looking at the solutions of the equation
as graphs in the $(y,z)$ plane. For any point $(y_0,z_0)$ such that 
$y_0-z_0^2\neq 0$ there are locally two holomorphic solutions corresponding to 
the choices for $y'(z_0)$ satisfying $y'(z_0)^2=y_0-z_0^2$. Also for the point 
$(0,0)$ one finds two holomorphic solutions, namely $y=az^2$ with 
$4a^2-a+1=0$. At any point $(y_0,z_0)\neq (0,0)$ with $y_0-z_0^2=0$ there are
locally two branched analytic solutions of the form
$y=y_0+c(z-z_0)^{3/2}+\cdots$ with $c^2=-\frac{8}{9}z_0$. We conclude that at 
any point $z_0\neq 0$ there passes a branched solution.

\bigskip

For the classification of the pairs $(X,D)$ where $D$ has no poles, the field
$K$ can be any differential field and may even have positive characteristic   
(this is the theme of \cite{Matsuda}). In the Appendix we treat the case of 
positive characteristic. Our main result, Thm. 4.5, states that, 
after a finite extension of $K$, the curve $X$ is defined over 
$\mathbb{C}$. However, the derivation $D$ may use elements of $K$. In all
cases the equation has the PP. 

For curves $X$ of genus 0 or 1 an explicit elementary proof is given. For
higher genus, differential Galois theory is applied to the connection
$\nabla$, derived from $D$ on $O_X$, on (powers of) the sheaf of relative
differential forms $\Omega _{X/K}$ on $X$, to prove that, after a
finite  extension of $K$, $X$ is defined over $\mathbb{C}$. 
Finally we note that in most cases these finite (separable) extensions of the
differential field $K$ are necessary for the main result.

\section{Autonomous Case}
Given is a polynomial $f(S,T)\in \mathbb{C}[S,T]$ involving both $S$ and $T$. For the question of whether the equation $f(y',y)=0$ has movable singularities we may and will suppose that $f$ is irreducible. We use the following notation: $\mathbb{C}\{z\}$ is the ring of convergent power series in $z$ (i.e., the germs at $z=0$ of holomorphic functions) and $\mathbb{C}(\{z\})$ denotes its field of fractions (the convergent Laurent series at $z=0$). For any integer $m\geq 1$ one considers the ring $\mathbb{C}\{z^{1/m}\}$ and its field of fraction $\mathbb{C}(\{z^{1/m}\})$.

By a {\it branched solution} for $f$ (at $z=0$) we will mean a $y\in \mathbb{C}(\{z^{1/m}\})$ with $m>1$ and $y\not \in \mathbb{C}(\{z\})$, satisfying $f(y',y)=0$. Since the equation is autonomous, the existence of such a $y$ will make any point of the complex plane into a branch point for $f$ and $f$ does not have the PP. On the other hand we will classify the equations $f$ having no branched solution (at $z=0$) and show that they have the PP.

One considers the field $F:=\mathbb{C}(s,t)$, defined by the equation $f(S,T)=0$. It is the function field of a certain curve $X$ over $\mathbb{C}$ (or compact Riemann surface). One considers on $F$ the $\mathbb{C}$-linear derivation $D$ given by $D(t)=s$. This makes $F$ into a differential field. Note that $D$ has an obvious interpretation as (meromorphic) vector field on $X$. The fields $\mathbb{C}(\{z^{1/m}\})$ are considered as differential fields w.r.t. the derivation $\frac{d}{dz}$. A branched solution is equivalent to a $\mathbb{C}$-linear homomorphism of differential fields $\phi :(F,D)\rightarrow (\mathbb{C}(\{z^{1/m}\}),\frac{d}{dz})$ with $m>1$ and such that the image of $\phi$ is not contained in $\mathbb{C}(\{z\})$.

\begin{theorem}\label{thm:Autonomous}~\\
  \begin{itemize}
  \item[(1)] $f$ has the PP if and only if the derivation $D$ on the field $F=\mathbb{C}(s,t)$ has no poles (i.e., the vector field $D$ has no poles).
  \item[(2)] The only possibilities for $(\mathbb{C}(s,t),D)$ with the PP are:
    \begin{itemize}
    \item[(a)] $(\mathbb{C}(x),(a_0+a_1x+a_2x^2)\frac{d}{dx})$ (with $a_0,a_1, a_2\in \mathbb{C}$ not all $0$) and \\
    \item[(b)] $(\mathbb{C}(x,y),D)$ with $y^2=x^3+ax+b$ (non singular elliptic curve) and $D=cy\frac{d}{dx}$ with $c\in \mathbb{C}^*$.
    \end{itemize}
  \end{itemize}
\end{theorem}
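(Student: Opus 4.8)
The plan is to analyze the two assertions separately, but both hinge on translating the analytic notion of a branched solution into the algebraic behaviour of the vector field $D$ on the curve $X$.

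For part (1), I would argue both directions by examining $D$ locally at a closed point $P$ of $X$ (including points at infinity). Pick a local uniformizer $u$ at $P$; then $D(u) = g$ for some $g \in F$, and $D$ has a pole at $P$ precisely when $g$ has a pole at $P$, say $g = c u^{-k} + \cdots$ with $k \geq 1$ and $c \neq 0$. The key computation is to solve the scalar ODE $\frac{du}{dz} = g(u)$ formally near a value $u(0)$ corresponding to $P$: when $D$ has a pole at $P$, the equation $u^{k}\,du = (c + \cdots)\,dz$ integrates to $u^{k+1} \sim c'(z - z_0)$, forcing $u$, and hence the solution $y = t(u)$, to lie in $\mathbb{C}(\{z^{1/(k+1)}\})$ with a genuine fractional power — this is the branched solution, so $f$ fails the PP. Conversely, if $D$ has no poles anywhere on $X$, then at every point $P$ the vector field is given by a holomorphic $g$ with $g(P)$ possibly zero or nonzero; if $g(P) \neq 0$ the solution through $P$ is an honest holomorphic function of $z$ by the existence-uniqueness theorem, and if $g(P) = 0$ then $P$ is a stationary point and the constant $u \equiv u(P)$ gives the only solution through it — in neither case does a branched solution appear, and one checks that away from the finitely many points where $g$ vanishes or where $f$ itself is singular, every solution continues analytically, giving the PP. The subtlety to get right is the point at infinity of $X$ and the interaction between branch points of the map $X \to \mathbb{A}^1_t$ and poles of $D$; this is where one must be careful, since a ramification point of the projection need not be a pole of $D$.

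For part (2), assume $D$ has no poles, i.e.\ $D$ is a regular (everywhere-holomorphic) vector field on the complete nonsingular curve $X$. The Riemann–Hurwitz / degree argument gives: a nonzero regular vector field on $X$ exists only if the genus $g(X) \leq 1$. If $g(X) = 0$, then $X \cong \mathbb{P}^1$, the space of regular vector fields is spanned by $\frac{d}{dx}, x\frac{d}{dx}, x^2\frac{d}{dx}$, so $D = (a_0 + a_1 x + a_2 x^2)\frac{d}{dx}$ with the $a_i$ not all zero; this is case (a). If $g(X) = 1$, then $X$ is an elliptic curve, the regular vector fields form a one-dimensional space spanned by the invariant differential's dual, and after putting $X$ in Weierstrass form $y^2 = x^3 + ax + b$ one finds the invariant vector field is a scalar multiple of $y\frac{d}{dx}$, giving $D = c\,y\frac{d}{dx}$ with $c \in \mathbb{C}^*$; this is case (b). The remaining task is to confirm that $D(t) = s$ with $f(s,t)$ irreducible does not impose extra constraints beyond the listed normal forms — i.e.\ that every pair $(X, D)$ in (a) or (b) actually arises from some polynomial equation $f(y',y)=0$ — which is a direct check: in case (a) one takes $x = t$ when $a_2 = 0$ (so $s = a_0 + a_1 t$, a polynomial relation) and a Möbius change of coordinate otherwise, and in case (b) one has $s = D(t) = c\,y$ with $y^2 = t^3 + at + b$, i.e.\ $f(s,t) = s^2 - c^2(t^3 + at + b)$.

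I expect the main obstacle to be the careful local analysis at infinity in part (1): one must verify that "$D$ has no poles on the complete curve $X$" — rather than just on the affine part cut out by $f(s,t) = 0$ — is the correct condition, handling the change of chart $u = 1/s$ (as in the worked example in the introduction) and checking that a pole of $D$ hidden at a point at infinity likewise produces a branched solution, while a mere ramification point of $X \to \mathbb{A}^1_t$ that is not a pole of $D$ does not. Once the dictionary "pole of $D$ $\Longleftrightarrow$ branched solution" is established cleanly, the classification in part (2) follows from the standard structure theory of regular vector fields on curves of genus $\leq 1$.
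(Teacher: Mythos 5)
Your proposal is correct and follows essentially the same route as the paper: the dictionary ``pole of the vector field $D$ at a point of the complete curve $X$ $\Leftrightarrow$ existence of a branched solution,'' proved by the same local integration $u^{k+1}\sim c'(z-z_0)$ (the paper writes this as $z=b(p)$ with $b'=1/a$, giving branching order $m=k+1$), followed by the Riemann--Roch classification of everywhere-regular vector fields in genus $0$ and $1$ with the normal forms (a) and (b). The one point where the paper is more careful than your sketch is the converse direction: instead of appealing to local existence-uniqueness at a limit point of a hypothetical branched solution, it turns the solution into a differential embedding $\phi:F\rightarrow \mathbb{C}(\{z^{1/m}\})$, reads off a valuation and hence a closed point $x\in X$, and uses minimality of $m$ to force $D(p)\notin O_{X,x}$ --- which is exactly the step your outline leaves implicit.
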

\begin{proof} (1) Suppose that there exists a branched solution, represented  by a homomorphism of differential fields $\phi :F\rightarrow \mathbb{C}(\{z^{1/m}\})$ with $m>1$ and $m$ minimal. Then $\phi$ induces a discrete valuation on $F$ and thus a (closed) point $x\in X$.

Let $O_{X,x}$ be the local ring of the point $x$. By definition $\phi$ maps $O_{X,x}$ to $\mathbb{C}\{z^{1/m}\}$. Let $p$ be a local parameter for $O_{X,x}$. Then $O_{X,x}^{an}$, the analytic local ring at the point $x$ of $X$ (seen as Riemann surface), is equal to $\mathbb{C}\{p\}$. Now $\phi$ uniquely extends to a homomorphism $\phi^{an} : O_{X,x}^{an} \rightarrow \mathbb{C}\{z^{1/m}\}$. Indeed, $\phi^{an}$ is given by the formula $\phi^{an} (\sum_{n\geq 0} a_n p^n) = \sum_{n\geq 0} a_n \phi (p)^n$.

The minimality of $m$ implies that $\phi^{an}$ is an isomorphism. Hence $\phi^{an}$ induces an isomorphism $\mathbb{C}(\{p\})\rightarrow \mathbb{C}(\{z^{1/m}\})$. The derivation $D$ on $F$ induces a derivation $D^{an}:\mathbb{C}\{p\}\rightarrow \mathbb{C}(\{p\})$
by the formula $D^{an}(\sum _{n\geq 0}a_np^n)=\sum _{n\geq 0}na_np^{n-1}\cdot D(p)$.

From the formula $\phi \circ D=\frac{d}{dz}\circ \phi$ and the definitions of $\phi ^{an}$ and $D^{an}$ one deduces $\phi ^{an}\circ D^{an}=\frac{d}{dz}\circ \phi ^{an}$. Finally, from $\frac{d}{dz}z^{1/m}=1/m\cdot z^{-1+1/m}\not \in \mathbb{C}\{z^{1/m}\}$, one deduces that $D(p)\not \in O_{X,x}$. Thus $D$ has a pole at $x$.

On the other hand, suppose that the derivation $D$ has a pole at the point $x\in X$. Then $D(O_{X,x})$ is not contained in $O_{X,x}$. As above, we extend $D$ to a derivation $D^{an} : O_{X,x}^{an} = \mathbb{C}\{p\} \rightarrow \mathbb{C}(\{p\})$. Then $D=a(p)\frac{d}{dp}$ for some $a(p)\in \mathbb{C}(\{p\})$ which has a pole of order $1-m$ (with $m>1$).

We want to solve the equation $a(p)\frac{d}{dp}v = \frac{1}{m}v^{1-m}$ where $v$ is a local parameter for $\mathbb{C}\{ p \}$ (i.e., $v=c_1p+c_2p^2+\cdots $ with $c_1\neq 0$). One rewrites the equation as $\frac{d}{dp}(v^m)=\frac{1}{a(p)}$. There is a $b(p)\in \mathbb{C}\{p\}$ of order $m$ with $\frac{d}{dp}b(p) = \frac{1}{a(p)}$. Then $z:=b(p)$ has an $m$th root $v\in C\{ p\}$. The order of $v$ is 1 and thus $v$ is a local parameter. One concludes that $(\mathbb{C}(\{p\}), D^{an}) = (\mathbb{C}(\{z^{1/m}\}),\frac{d}{dz})$. This defines a branched solution.

(2) Using Riemann-Roch one finds that the sheaf of derivations on $X$ has a non zero global section only if the genus of $X$ is 0 or 1. In the first case the field $F=\mathbb{C}(x)$ and $D=(a_0+a_1x+a_2x^2)\frac{d}{dx}$ is the space of the derivations without poles. For genus 1, the field is as in the statement and the only (non zero) derivations without poles are $cy\frac{d}{dx}$ with $c\in \mathbb{C}^*$.
\end{proof}

\begin{example}
Let $f := S^p-T^q$ with $(p,q) = 1$. The field $\mathbb{C}(s,t)$ equals $\mathbb{C}(x)$ where $s=x^q$, $t=x^p$. The formula $D(t)=s$ and the last part of the theorem imply that PP is equivalent to $q-p\in \{-1,0,1\}$.
\end{example}

\begin{corollary}
The Riccati differential equation and the Weierstrass differential equation, i.e., the cases (a) and (b) of Theorem \ref{thm:Autonomous},  have the PP.
\end{corollary}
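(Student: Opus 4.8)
The corollary asks to show that the Riccati and Weierstrass equations — cases (a) and (b) of Theorem~\ref{thm:Autonomous} — have the Painlev\'e property. The plan is simply to invoke Theorem~\ref{thm:Autonomous} directly: by part (2), these are precisely the pairs $(\mathbb{C}(s,t),D)$ arising from an autonomous equation $f(y',y)=0$ for which $D$ has no poles on the associated curve $X$, and by part (1), the absence of poles of $D$ is equivalent to $f$ having the PP. So the content is to verify that in each of the two cases the derivation $D$ indeed has no pole on the relevant complete curve $X$, which is already asserted in the proof of part (2) of the theorem but is worth spelling out.

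For case (a), $X=\mathbb{P}^1$ with coordinate $x$ and $D=(a_0+a_1x+a_2x^2)\frac{d}{dx}$. On the affine chart this is clearly regular; at $x=\infty$ one sets $u=1/x$, computes $\frac{d}{dx}=-u^2\frac{d}{du}$, and finds $D = -(a_0u^2+a_1u+a_2)\frac{d}{du}$, which is regular at $u=0$. Hence $D$ has no pole, and part (1) gives the PP. For case (b), $X$ is the smooth projective model of the elliptic curve $y^2=x^3+ax+b$ and $D=cy\frac{d}{dx}$ with $c\in\mathbb{C}^*$. The point is that $y\frac{d}{dx}$ is (up to scalar) the translation-invariant vector field on the elliptic curve: the invariant differential is $\omega = \frac{dx}{y}$, so the dual vector field $y\frac{\partial}{\partial x}$ is regular and nowhere vanishing on all of $X$, including the point at infinity. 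One can check this at infinity explicitly by passing to a local parameter at $O$ (for instance $t=x/y$, with $x=t^{-2}+\cdots$, $y=t^{-3}+\cdots$) and verifying that $D(t)$ is a unit in the local ring; in particular $D$ has no pole there either. Again part~(1) of Theorem~\ref{thm:Autonomous} yields the PP.

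The only mild subtlety — and the step I would be most careful about — is the behaviour at the point(s) at infinity, since that is where a naive affine computation could hide a pole; everything on the affine part is immediate because $D$ is given by a polynomial (case (a)) respectively a regular function times a polynomial vector field (case (b)). Once the check at infinity is done, the corollary follows formally from Theorem~\ref{thm:Autonomous}. A one- or two-line proof citing the theorem, with the infinity computation relegated to the reader or done in passing, is all that is needed here.

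\begin{proof}
By part (1) of Theorem~\ref{thm:Autonomous} it suffices to check that in cases (a) and (b) the derivation $D$ has no poles on the complete curve $X$. In case (a), $X=\mathbb{P}^1$ and $D=(a_0+a_1x+a_2x^2)\frac{d}{dx}$ is regular on the affine line; at $x=\infty$, writing $u=1/x$, one has $\frac{d}{dx}=-u^2\frac{d}{du}$, so $D=-(a_0u^2+a_1u+a_2)\frac{d}{du}$, which is regular at $u=0$. In case (b), $D=cy\frac{d}{dx}$ is regular on the affine curve $y^2=x^3+ax+b$, and at the point $O$ at infinity, with local parameter $t=x/y$, one computes $x=t^{-2}+\cdots$, $y=t^{-3}+\cdots$, so that $D(t)=c\,y\,\frac{d}{dx}(x/y)$ is a unit in $O_{X,O}$; in particular $D$ is regular at $O$. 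Thus $D$ has no poles, and by Theorem~\ref{thm:Autonomous}(1) the equation has the PP.
\end{proof}
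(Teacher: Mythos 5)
Your proposal is circular with respect to the logical structure of the paper, and it omits the analytic content that the corollary is actually meant to supply. You deduce the PP from Theorem~\ref{thm:Autonomous}(1) (``$D$ has no poles $\Rightarrow$ PP''), but the proof of that theorem given in the paper only establishes the equivalence ``$D$ has a pole $\Longleftrightarrow$ there exists a branched solution.'' Since the PP is defined by the discreteness of \emph{both} the set of branch points and the set of essential singularities, the absence of branched solutions (equivalently, of poles of $D$) only rules out movable branch points; it says nothing about movable essential singularities. The corollary is precisely the place where the paper closes this gap (compare the announcement in Section~2: ``we will classify the equations $f$ having no branched solution \dots and show that they have the PP''): one must argue that the actual solutions of cases (a) and (b) have no movable singularities other than poles. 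The paper does this classically: for the Riccati equation with $a_2\neq 0$ the solutions are $g=v'/v$ with $v$ a nonzero solution of the associated second-order linear equation, so $g$ extends meromorphically and can at most have poles; for $a_2=0$ the equation is linear inhomogeneous; and for the Weierstrass equation the solutions are shifts of the Weierstrass $\wp$-function, again with only poles in the finite plane. None of this appears in your argument, so the key step is missing and citing Theorem~\ref{thm:Autonomous}(1) begs the question.

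Your local computations (regularity of $(a_0+a_1x+a_2x^2)\frac{d}{dx}$ at $x=\infty$, and of $cy\frac{d}{dx}$ at the origin of the elliptic curve via the invariant differential $dx/y$) are correct, but they only re-verify what is already contained in part (2) of the theorem, namely that these are the derivations without poles; they do not by themselves yield the Painlev\'e property. To repair the proof, replace the appeal to Theorem~\ref{thm:Autonomous}(1) by the explicit description of the solutions (linear case, $v'/v$ for Riccati, shifts of $\wp$ for Weierstrass) and conclude that all movable singularities are poles.
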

\begin{proof}
We quickly give the well known arguments. A solution to (a) or (b) corresponds to a differential homomorphism  $\phi$ from $(F,D)$ to some differential ring of functions. In case (a) this means that $g=\phi (x)$ satisfies the Riccati equation $g'=a_0+a_1g+a_2g^2$. In case (b) this means that $g=\phi (y)$ satisfies a Weierstrass equation $(\lambda g')^2=g^3+ag+b$. Case (a) with $a_2=0$ is an inhomogeneous linear equation and the solutions have no other singularities than infinity. Case (a) with $a_2\neq 0$ is the Riccati equation associated to a linear equation of order two. The solutions of the Riccati equation have the form $g=\frac{v'}{v}$, where $v$ is a non zero solution of this linear equation. Thus $g$ can at most have a pole. Finally, the solutions to case (b) are shifts of the Weierstrass function and the singularities in the finite plane are poles.
\end{proof}

\noindent {\it Remark}. In the paper of Briot and Bouquet, the PP is translated
into an explicit, necessary and sufficient condition for the irreducible
polynomial $f\in \mathbb{C}[S,T]$ \cite{BriotBouquet2}. It can be verified that this condition is equivalent to the cases (a) or (b) of Theorem \ref{thm:Autonomous}.

\section{Assumptions and Notation}
In the general case one considers a differential equation of the form $f(y',y) = 0$ where $f(S,T)$ is a polynomial in $S$ and $T$ and has coefficients in some field $K$ (possibly of germs) of meromorphic functions, defined on some part of the complex plane and closed under $\frac{d}{dz}$. We restrict ourselves to the most interesting case where $K$ is a finite extension of $\mathbb{C}(z)$. One can copy the proof below to the case of other fields of meromorphic functions.

Thus $K$ is the field of meromorphic functions on some compact Riemann surface $Z$. A closed point $P$ of $Z$ (or place of $K$) with local parameter $p$ is called {\it ramified for $f$} if there exists a $y\in \mathbb{C}(\{p^{1/e}\})$, not an element of $\mathbb{C}(\{p\})$, satisfying $f(y',y)=0$. Further we will call $y$ a {\it branched solution at $P$}. If there are infinitely many ramified points $P$ for $f$, then certainly $f$ does not have the PP. In what follows we will classify the equations $f$ that have only finitely many ramified points on $Z$ and show that these equations do have the PP. We note that replacing $K$ by a finite extension does not change the property: ``there are only finitely many ramified points for $f$.'' In particular, we will assume that $f\in K[S,T]$ is absolutely irreducible, {\it i.e.,} $f$ is irreducible as element of $\overline{K}[S,T]$. Further we will assume that both $S$ and $T$ are present in $f(S,T)$.

Write $K[s,t]=K[S,T]/(f)$. To a branched solution $y$ at $P$ (with local parameter $p$) one associates a homomorphism  $\psi :K[s,t]\rightarrow \mathbb{C}(\{p^{1/e}\})$, extending the given embedding $K\subset \mathbb{C}(\{p\})$, by sending $t$ to $y$ and $s$ to $y':=\frac{d}{dz}y$. One would like $K[s,t]$ to be a differential ring by introducing the $\mathbb{C}$-linear derivation $D$ by $D(z)=1$ and $D(t)=s$. Let $\Delta$ denote $\frac{df}{ds}$ (evaluated at $s,t$). In general $K[s,t]$ is not invariant under $D$, but the localization $K[s,t,\frac{1}{\Delta}]$ is seen to be invariant under $D$ and is therefore a differential ring.

There are only finitely many $y$ (obviously algebraic over $K$) such that the corresponding $\psi$ satisfies $\psi (\Delta)=0$. Omitting these $y$, we may extend the given $\psi :K[s,t]\rightarrow \mathbb{C}(\{p^{1/e}\})$ to a $\psi$ defined on the localization $K[s,t,\frac{1}{\Delta}]$. This new $\psi$ has a non trivial kernel, if and only if $y$ is algebraic over $K$. In this case $K(y)$ is a finite extension of $K$ and the kernel of $\psi$ is a maximal ideal of $K[s,t,\frac{1}{\Delta}]$, closed under the differentiation $D$.

Consider a branched solution $y$ such that the corresponding $\psi$ has a trivial kernel (thus $y$ is transcendental over $K$). Let $F$ denote the field of fractions of $K[s,t]$, made into a differential field by the unique extension (also called $D$) of the derivation $D$. Then $\psi$ extends to a homomorphism of differential fields $\phi :(F,D)\rightarrow (\mathbb{C}(\{p^{1/e}\}),\frac{d}{dz})$, where $e > 1$ is the smallest natural number for which the image of $\phi$ is contained in $\mathbb{C}(\{p^{1/e}\})$. This is called a {\it transcendental branched solution}. One associates to $F$ the non singular, absolutely irreducible, projective curve $X$ over $K$ with function field $F$.

\section{General Case}
We will need the following result.

\begin{lemma}\label{lem:BranchedSolution}
Let $m>1$ and $f\in {\bf C}\{z,w\}$ with $f(0,0)\neq 0$ be given. Choose $c\in \mathbb{C}^*$ with $c^m=f(0,0)$. Then the equation $(y^m)'=f(z,y)$ has a solution in $\mathbb{C}\{z^{1/m}\}$ of the form $y=cz^{1/m}+\cdots$.
\end{lemma}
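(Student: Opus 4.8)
The plan is to substitute $y = z^{1/m}u$ and solve for $u \in \mathbb{C}\{z^{1/m}\}$ with $u(0) = c$ via a fixed point / implicit function argument.

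\medskip

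\noindent\textbf{Approach.} Set $w = z^{1/m}$, so that $\mathbb{C}\{z^{1/m}\} = \mathbb{C}\{w\}$ and $z = w^m$. Under the change of variable $z \mapsto w^m$ one has $\frac{d}{dz} = \frac{1}{m w^{m-1}} \frac{d}{dw}$, so writing $y = y(w)$ the equation $(y^m)' = f(z,y)$ becomes
\[
\frac{1}{m w^{m-1}}\,\frac{d}{dw}\bigl(y^m\bigr) = f(w^m, y),
\qquad\text{i.e.}\qquad
\frac{d}{dw}\bigl(y^m\bigr) = m w^{m-1} f(w^m, y).
\]
We look for a solution of the form $y = w\,u(w)$ with $u \in \mathbb{C}\{w\}$ and $u(0) = c$. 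Substituting, the left-hand side is $\frac{d}{dw}(w^m u^m) = m w^{m-1} u^m + w^m (u^m)' = m w^{m-1}\bigl(u^m + \frac{w}{m}(u^m)'\bigr)$, and the right-hand side is $m w^{m-1} f(w^m, wu)$. Cancelling the common factor $m w^{m-1}$ (legitimate since we are in the integral domain $\mathbb{C}\{w\}$), the equation is equivalent to
\[
u^m + \frac{w}{m}\,(u^m)' = f(w^m, w u).
\]

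\medskip

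\noindent\textbf{Key steps.} First, rewrite the last equation as a fixed-point problem for $u$. Since $u(0) = c \neq 0$, on a small neighbourhood $u^m$ is an analytic function of $u$ with nonvanishing derivative, so we may view the unknown equivalently as $v := u^m \in \mathbb{C}\{w\}$ with $v(0) = c^m = f(0,0) \neq 0$; then $u = v^{1/m}$ is a well-defined element of $\mathbb{C}\{w\}$ (the branch with $u(0) = c$). The equation becomes $v + \frac{w}{m} v' = f(w^m, w v^{1/m})$. Second, compare Taylor coefficients: writing $v = \sum_{n\ge 0} v_n w^n$, the left side has $w^n$-coefficient $v_n + \frac{n}{m} v_n = \frac{m+n}{m} v_n$, while the right side's $w^n$-coefficient is a polynomial in $v_0, \dots, v_{n-1}$ (and the coefficients of $f$) — crucially it does not involve $v_n$, because $f(w^m, w v^{1/m})$ has $w$ appearing with positive power in both slots, so each occurrence of a $v_j$ in the $w^n$-coefficient carries a factor $w^{j+1}$ or higher, forcing $j \le n-1$. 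Hence the recursion
\[
\frac{m+n}{m}\, v_n = \bigl[\text{polynomial in } v_0,\dots,v_{n-1}\bigr], \qquad v_0 = f(0,0),
\]
determines all $v_n$ uniquely (the factor $\frac{m+n}{m}$ is never zero for $n \ge 0$). Third, establish convergence of $\sum v_n w^n$: this is the standard method of majorants. One bounds the right-hand side $F(w,v) := f(w^m, wv^{1/m})$ by a majorant series — it is analytic in $(w,v)$ near $(0, f(0,0))$ — and shows by induction that the $v_n$ are dominated by the coefficients of the analytic solution of the corresponding majorant equation, which one can take of the simple form $\frac{m+n}{m}\tilde v_n = (\text{geometric-type bound})$; since $\frac{m+n}{m} \ge 1$, the division only helps the estimate. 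This yields $v \in \mathbb{C}\{w\}$, hence $u = v^{1/m} \in \mathbb{C}\{w\}$ and $y = wu = c z^{1/m} + \cdots \in \mathbb{C}\{z^{1/m}\}$ solving the original equation.

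\medskip

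\noindent\textbf{Main obstacle.} The formal recursion is immediate; the only real work is the convergence estimate. The mild subtlety is that the composite $f(w^m, w v^{1/m})$ involves the non-polynomial operation $v \mapsto v^{1/m}$, so one must first check that near $v = f(0,0) \neq 0$ this is genuinely analytic (it is, since $f(0,0) \neq 0$) in order to set up the majorant; once that is in hand, the factor $\frac{m+n}{m} \ge 1$ makes the majorant argument if anything easier than in the classical Cauchy–Kovalevskaya-type situation, so no genuinely new difficulty arises. Alternatively, and perhaps more cleanly, one can avoid coefficient-chasing entirely: the equation $v + \frac{w}{m}v' = F(w,v)$ with $F$ analytic and $v(0) = F(0, \cdot)|_{v_0} $ chosen so that $v_0 = F(0,v_0)$ is solvable — here $F(0,v) = f(0,0)$ is constant, so $v_0 = f(0,0)$ works automatically — is a regular-singular ODE at $w = 0$ whose indicial-type obstruction is exactly the vanishing of $\frac{m+n}{m}$, which never happens; invoking the standard existence theorem for such equations (or the implicit function theorem in the Banach space of convergent power series with a suitably weighted norm) gives the result directly.
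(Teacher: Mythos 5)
Your proof is correct and takes essentially the same route as the paper: after the substitution $z=w^m$ one obtains a power-series recursion with nonvanishing indicial factor $\frac{m+n}{m}$ (the paper's $(n+m)h_n=\cdots$ after writing $y=t(1+h)$) together with a majorant estimate for convergence. Your substitution $v=u^m$, which linearizes the left-hand side, is only a mild streamlining of the same argument.
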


\begin{proof}
Consider a formal series $y=\sum _{i\geq 1}c_iz^{i/m}$. Then 
\[(y^m)'=\sum _{a\geq m}\frac{a}{m} \{\sum _{i_1+\cdots +i_m=a}c_{i_1}\cdots c_{i_m}\} \cdot z^{-1+a/m}\ .\]
Comparing this expression with $f(z,\sum c_iz^{i/m})$ one finds $c_1^m=f(0,0)$. After choosing $c_1=c$, one compares the coefficients of $z^{-1+a/m}$ and one finds that $c^{m-1}c_{a-m+1}$ equals a polynomial formula in $c_1,\dots ,c_{a-m}$.  Thus we found a unique formal series satisfying the differential equation. One can verify (by brute force) that this series is in fact convergent. More explicitly, we may suppose $f(0,0)=1$ and $c=1$.  Write (for the moment) $z=t^m$ and $y=t(1+h)$ with $h\in t\mathbb{C}[[t]]$. One obtains a differential equation for $h$ of the form
\[t\frac{d}{dt}h+mh=\sum _{a\geq 1,b\geq 0}c_{a,b}t^ah^b + \sum _{b\geq 2}c_bh^b\ ,\]
where the coefficients satisfy $|c_{a,b}|\leq R^{a+b}$ and $|c_b|\leq R^b$ for some $R>0$. After multiplying $h$ and $t$ by suitable constants we may suppose that $R$ is sufficiently small. Write $h=\sum _{n\geq 1}h_nt^n$, then the recurrence relation for the $h_n$ takes the form $(n+m)h_n=$ a polynomial in $h_1,\dots,h_{n-1}$ of which the absolute value can be estimated. One can verify that
$\lim h_n=0$.
\end{proof}

\begin{figure}
\begin{center}
\begin{overpic}[scale=.5]{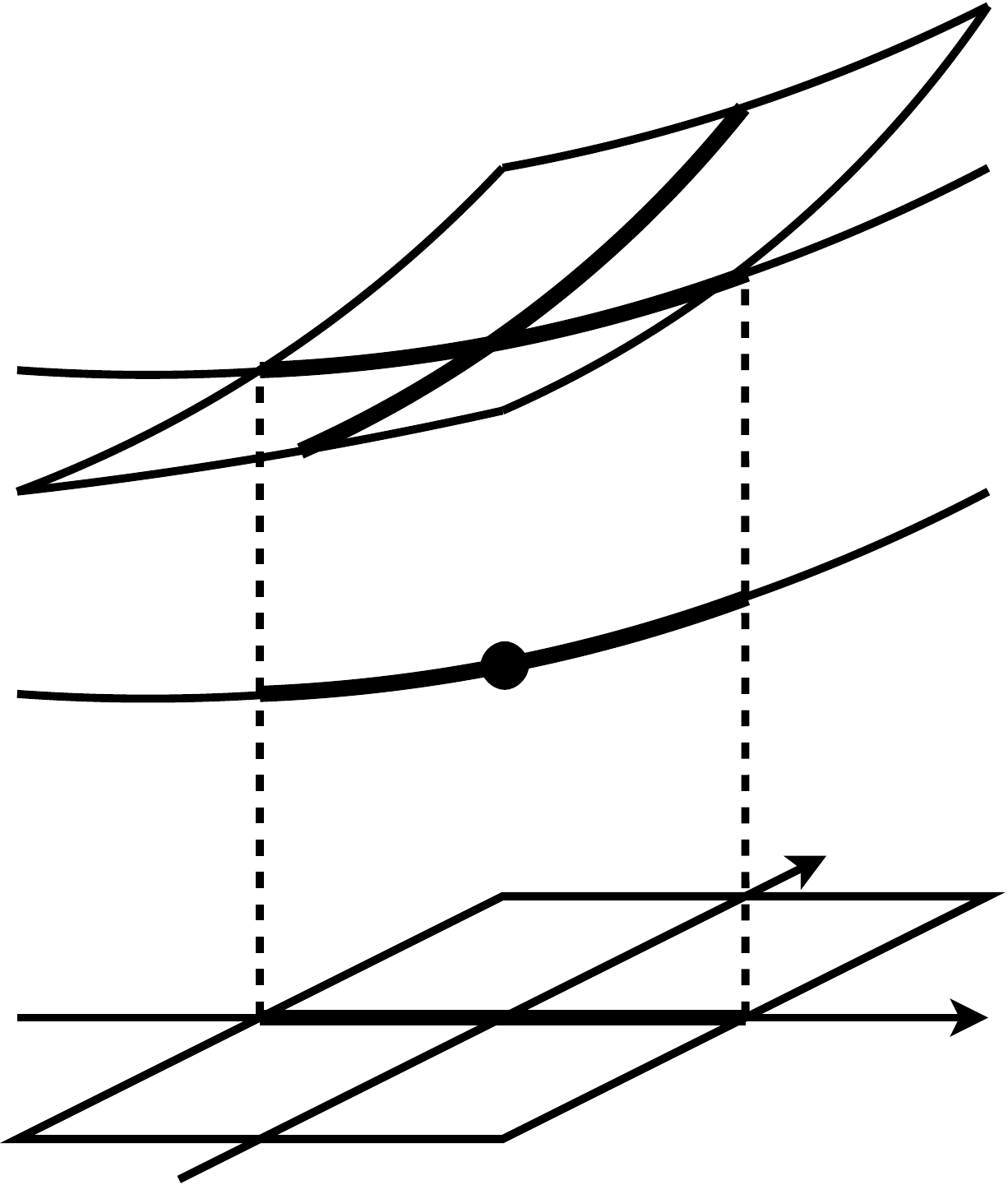}
\put(34,16){$U$}
\put(40,6){$V$}
\put(40,50){$[Q]$}
\put(45,81){$[Q]$}
\put(71,29){$\mathbb{C}$}
\put(85,13){$\mathbb{C}$}
\put(85,60){$Z$}
\put(85,91){$\mathcal{X}$}
\end{overpic}
\end{center}
\caption{A schematic representation of the situation in the proof of Proposition \ref{prop:FiniteBranchPointsImpliesRegular}.}
\end{figure}

\begin{proposition}\label{prop:FiniteBranchPointsImpliesRegular}
Suppose that there exists a closed point $Q$ of $X$ such that the local ring $O_{X,Q}$ is not invariant under $D$. Then there are infinitely many ramified points {\rm (}on $Z${\rm )} for $f$.
\end{proposition}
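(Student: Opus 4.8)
The plan is to localize the problem to a neighbourhood of the image of $Q$ in $Z$ and then produce, for each base point in an open subset of $Z$, an actual branched solution by invoking the local existence Lemma~\ref{lem:BranchedSolution}. First I would spread the curve $X$ over $K$ out to a morphism $\mathcal{X}\to Z$ of varieties over $\mathbb{C}$, with $\mathcal{X}$ a surface: the generic fibre is $X$, and shrinking $Z$ to a Zariski-open affine $V$ we may assume $\mathcal{X}\to V$ is a smooth (relative dimension one) affine morphism carrying a section $[Q]$ of the closure of the point $Q$, together with the derivation $D$ now interpreted as a rational vector field on $\mathcal{X}$ lifting $\frac{d}{dz}$ on $V$. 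The hypothesis that $O_{X,Q}$ is not $D$-invariant says precisely that $D$ has a pole along the divisor $[Q]\subset \mathcal{X}$; by further shrinking $V$ we may assume this divisor is irreducible, that $D$ has a pole of some order $\geq 1$ exactly along it on the open piece $U\subset\mathcal{X}$ we keep, and that no other bad behaviour occurs over $V$.

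Next, near a point of $[Q]$ I would choose local analytic coordinates adapted to the situation: a coordinate $z$ pulled back from the base (so $V\ni z_0$ corresponds to the fibre) and a fibre coordinate $p$ which is a local parameter along $[Q]$ in each fibre, so that $[Q]=\{p=0\}$. In these coordinates $D=\frac{\partial}{\partial z}+a(z,p)\frac{\partial}{\partial p}$ where, restricting to a fixed nearby fibre $z=z_0$, the coefficient $a(z_0,p)$ has a pole of order $m-1$ with $m>1$ along $p=0$ — exactly the shape of the derivation treated in part~(1) of Theorem~\ref{thm:Autonomous} and, more to the point, of Lemma~\ref{lem:BranchedSolution}. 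Writing the pole order as $m-1$ and clearing denominators, the equation $D(p)=a(z,p)$ for a solution curve through a point of $[Q]$ over $z=z_0$ becomes, after the substitution turning $p$ into an $m$-th power, precisely an equation of the form $(y^m)' = f(z,y)$ with $f(z_0,0)\neq 0$; Lemma~\ref{lem:BranchedSolution} then furnishes a solution $y\in\mathbb{C}\{(z-z_0)^{1/m}\}$ with leading term a nonzero multiple of $(z-z_0)^{1/m}$, hence genuinely ramified and not in $\mathbb{C}\{z-z_0\}$. Composing with the map $\mathcal{X}\to\mathbb{A}^1$ induced by $y$ (the function $t$, or rather a suitable coordinate on $X$) gives a branched solution of $f(y',y)=0$ at the point $z_0$. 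Since this works for every $z_0$ in a nonempty Zariski-open subset of $Z$ over which the setup is valid, there are infinitely many ramified points.

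The main obstacle I expect is bookkeeping rather than a deep difficulty: making rigorous the passage from ``$D$ has a pole at the closed point $Q$ of the $K$-curve $X$'' to ``$D$ has a pole of a controlled order along a well-behaved divisor $[Q]$ of the surface $\mathcal{X}$, over a cofinite set of fibres,'' and checking that the pole order of $D$ restricted to a general fibre really is $\geq 1$ (it could a priori drop on special fibres, which is fine — we only need it on infinitely many). One must also ensure that $Q$ is a point where the branched solution is transcendental-or-algebraic in a way that still makes the graph interpretation work, i.e. that $\Delta=\frac{df}{ds}$ does not vanish identically there, and handle the point at infinity on $X$ if $Q$ lies there — but these are the same routine reductions already used in Section~3, and the heart of the argument is the single application of Lemma~\ref{lem:BranchedSolution} on a general fibre.
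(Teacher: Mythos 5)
Your construction follows the paper's own route for the first half: spread $X$ out to a surface over (an open part of) $Z$, use the hypothesis to get a pole of $D$ along the divisor $[Q]$, pass to analytic coordinates $(z,u)$ with $[Q]=\{u=0\}$, observe that $D(u^m)$ is holomorphic and non-vanishing there, and apply Lemma~\ref{lem:BranchedSolution} fibrewise to produce, for each $z_0$ in the good locus, a ramified series $h\in\mathbb{C}\{(z-z_0)^{1/m}\}$ solving $D(u)=\dots$ in the fibre coordinate. (One small omission: a section along the closure of $Q$ exists only after you have reduced to $Q$ being $K$-rational; this uses the remark of Section 3 that both the hypothesis and the conclusion are stable under finite extensions of $K$.)

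The genuine gap is the final step, which you dispose of with ``composing with the map induced by $t$.'' What Lemma~\ref{lem:BranchedSolution} gives you is a branched solution \emph{in the local parameter $u$}, and the definition of a ramified point for $f$ requires the value of $t$ itself to lie in $\mathbb{C}(\{(z-z_0)^{1/e}\})\setminus\mathbb{C}(\{z-z_0\})$. This does not follow formally from the branching of $h$, and it is exactly what the second half of the paper's proof establishes. If $h$ is transcendental over $K$, the homomorphism $\psi$ on $K[u,Du]$ extends to all of $(F,D)$, and then branching of $\psi(t)$ follows because $F$ is generated over $K$ by $t$ and $D(t)$, so an unbranched image of $t$ would force the whole image, including $h$, to be unbranched. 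But $h$ may be \emph{algebraic} over $K$; then $\psi$ only extends to a maximal differential subring, which is a $D$-invariant local ring $O_{X,x}$ (with $x\neq Q$ in general), and it can happen that $t\notin O_{X,x}$, i.e.\ the solution curve runs into a pole of $t$, in which case $z_0$ need not be ramified for $f$ by this construction. The paper rescues the argument by a finiteness count: there are only finitely many points $x$ with $t\notin O_{X,x}$, each residue field $K(x)$ is a finite extension of $K$ in which only finitely many places of $K$ ramify, so only finitely many $z_0$ are lost. Your closing remark about $\Delta$ not vanishing and about $Q$ possibly lying at infinity misidentifies the difficulty: the delicate point is the point $x$ through which the constructed (possibly algebraic) solution passes, not the point $Q$ where $D$ has its pole. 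Without this case analysis the conclusion ``every $z_0$ in a Zariski-open set is ramified for $f$'' is not justified; what one can prove, and what suffices, is ``all but finitely many.''
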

\begin{proof}
The assumption and the statement that we want to prove are stable under finite extensions of $K$. Thus we may suppose that the closed point $Q$ is rational over $K$. The field $F$ can also be seen as the function field of some normal projective surface $\mathcal{X}$ over $\mathbb{C}$. The inclusion $K\subset F$ induces a `rational map' $\mathcal{X}\dashrightarrow Z$, where $Z$ is the non singular, irreducible, projective curve over $\mathbb{C}$ with function field $K$. The assumption $Q\in X(K)$ induces a `rational section' of the above rational map. In the analytic category, one can avoid singularities and describe locally this rational map and section by the following:

$U\subset Z$ is a disk, identified with $\{z\in \mathbb{C}|\ |z-z_0|<\epsilon \}$. $V\subset \mathcal{X}$ is a multidisk $\{z\in \mathbb{C}|\ |z-z_0|<\epsilon \} \times \{u\in \mathbb{C}|\ |u|<\epsilon \}$. The rational map is just $(z,u)\mapsto z$ and the rational section is $z\mapsto (z,0)$. Further, $u = 0$ is the restriction to $V$ of the divisor on $\mathcal{X}$ given by the point $Q$ on $X$. We note that $u$ is a local parameter at $Q$ and the completion of $O_{X,Q}$ equals $K[[u]]$. It is given that $D(u)\not \in K[[u]]$. Therefore $D(u)=a_{-m+1}u^{-m +1}+\cdots$ with $m>1$ and $a_{-m+1}\in K$, different from 0. Then $D(u^m)\in K[[u]]$ and does not lie in $uK[[u]]$.

In analytic terms, $D(u^m)$ is a non zero holomorphic function on $V$. After shrinking $U$ and $V$ we may suppose that $D(u^m)$ has no zeros on $V$. Take now any point $z_1\in U$. Then we have $D(u^m)=g(z-z_1,u)\in \mathbb{C}\{z-z_1,u\}$ with $g(0,0)\neq 0$.

According to Lemma \ref{lem:BranchedSolution}, the differential equation $(h^m)'=g(z-z_1,h)$ has a solution $h=*(z-z_1)^{1/m}+\cdots$ lying in $\mathbb{C}(\{(z-z_1)^{1/m}\})$. The homomorphism $\mathbb{C}\{z-z_1,u\}\rightarrow \mathbb{C}\{(z-z_1)^{1/m}\}$ given by $z-z_1\mapsto z-z_1,\ u\mapsto h$ induces a homomorphism $\psi :K[u,Du]\rightarrow \mathbb{C}(\{(z - z_1)^{1/m}\})$ such that $\frac{d}{dz}\psi (u)=\psi (Du)$. If $h$ is not algebraic over $K$, then $\psi$ extends to a differential homomorphism $(F,D)\rightarrow \mathbb{C}(\{(z-z_1)^{1/e}\})$ for some multiple $e$ of $m$.

Suppose that $h$ is algebraic over $K$. Then one can extend $\psi :K[u,Du]\rightarrow \mathbb{C}(\{(z-z_1)^{1/m}\})$ to a maximal differential subring $R$ of $F$ and with values in a finite extension of the above field. It is easily seen that $R$ is some local ring $O_{X,x}$, invariant under $D$. If $t\in O_{X,x}$, then $K[s,t]\subset O_{X,x}$ and the restriction of $\psi$ to this subring produces a branched algebraic solution.

There is one case that we still have to consider, namely $t\not \in O_{X,x}$. Then $t^{-1}$ lies in the maximal ideal of $O_{X,x}$ and $\psi (t^{-1})=0$. Let $m_{X,x}$ denote the maximal ideal of $O_{X,x}$ and $K(x)$ its residue field. Then $\psi(m_{X,x}) = 0$ and $\psi$ factors as $O_{X,x}\rightarrow K(x)\rightarrow \mathbb{C}(\{(z-z_1)^{1/e}\})$ with $e$ a suitable multiple of $m$. The place of $K$ corresponding to $z_1$ is ramified in $K(x)$. As there are only finitely many of these places and finitely many $x$ with $t\not \in O_{X,x}$ the number of these places $z_1$ is finite. Therefore we found infinitely many points $z_1$ on $Z$ which are ramified for $f$.
\end{proof}

\begin{proposition}\label{prop:RegularImpliesFibredProduct}
Suppose that the derivation $D$ on $F$ is {\em regular}, i.e., every local ring $O_{X,Q}$ is invariant under $D$. Then there exists a finite extension $L$ of $K$ and a non singular curve $X_0$ over the field of constants $\mathbb{C}$ of $K$ such that $X\times _KL\cong X_0\times _{\mathbb{C}}L$.
\end{proposition}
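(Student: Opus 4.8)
The plan is to analyze the regular derivation $D$ on $F=K(X)$ through the connection it induces on line bundles, and to use differential Galois theory to descend $X$ to $\mathbb{C}$ after a finite extension. First I would dispose of the low-genus cases by hand. If $X$ has genus $0$, then after a finite extension $L$ of $K$ we have $X\times_K L\cong\mathbb{P}^1_L$; one checks directly that a regular derivation on $\mathbb{P}^1_L$ is of the form $(a_0+a_1x+a_2x^2)\frac{d}{dx}$ with $a_i\in L$, and that a fractional-linear change of coordinate (allowed after a further finite extension, to adjoin roots of a quadratic) brings the coefficient polynomial into a normal form with constant coefficients, so $X_0=\mathbb{P}^1_{\mathbb{C}}$ works. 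If $X$ has genus $1$, after a finite extension one gets a Weierstrass model $y^2=x^3+ax+b$ with $a,b\in L$, and a regular derivation must be $D=cy\frac{d}{dx}$ for some $c\in L^*$ (the same Riemann–Roch count as in Theorem \ref{thm:Autonomous}(2)); then $D$ acts on the invariant differential $\omega=\frac{dx}{y}$ by $D(\omega)=\gamma\omega$ for some $\gamma\in L$, and the constancy of $\omega$ forces $a,b$ to be (up to scaling) constant after adjusting $x,y$, again descending $X$ to $\mathbb{C}$.

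For $X$ of genus $g\geq 2$ the idea is to use the canonical embedding. The derivation $D$ extends to a connection $\nabla$ on $O_X$, hence on the relative cotangent sheaf $\Omega_{X/K}$ and on all its tensor powers $\Omega_{X/K}^{\otimes n}$; regularity of $D$ guarantees these connections have no poles, i.e. are genuine connections (not merely meromorphic ones). Passing to global sections, $\nabla$ induces a $K$-linear endomorphism of each finite-dimensional $K$-vector space $H^0(X,\Omega_{X/K}^{\otimes n})$ satisfying the Leibniz rule over $(K,\frac{d}{dz})$; that is, each such space is a differential module over $K$. Differential Galois theory then provides a Picard–Vessiot extension and, crucially, a finite extension $L/K$ (one may take $L$ to contain the field generated by a fundamental matrix's entries restricted appropriately, or more precisely: the differential module becomes trivial after the right base change because its solution space is what pins down the $\mathbb{C}$-structure) over which a full $\mathbb{C}$-basis of horizontal (flat) sections appears. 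The flat sections of $\Omega_{X/K}^{\otimes n}$ over $L$ span an $L$-form of the space that is actually defined over $\mathbb{C}$; taking $n$ large enough that $\Omega_{X/K}^{\otimes n}$ is very ample, the corresponding projective embedding of $X\times_K L$ is then cut out by equations with constant coefficients, yielding the required $X_0$ over $\mathbb{C}$ with $X\times_K L\cong X_0\times_{\mathbb{C}} L$.

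The main obstacle I anticipate is making precise the step where "finitely many flat sections defined over a finite extension" is upgraded to "the whole curve is defined over $\mathbb{C}$". One must verify that the connection $\nabla$ on $\Omega_{X/K}^{\otimes n}$ has finite differential Galois group, or at least that its solution space, together with the multiplication maps $H^0(\Omega^{\otimes a})\otimes H^0(\Omega^{\otimes b})\to H^0(\Omega^{\otimes(a+b)})$, is compatible with the connections — i.e. that $\nabla$ respects the ring structure of the canonical ring $\bigoplus_n H^0(X,\Omega_{X/K}^{\otimes n})$. Granting that compatibility, the subring of horizontal elements is a graded $\mathbb{C}$-algebra whose $\mathrm{Proj}$ is $X_0$, and base change to $L$ recovers $X$. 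The delicate points are: (i) showing the differential Galois group is finite — this should follow because $\nabla$ preserves a natural lattice/integral structure coming from $O_X$, forcing the monodromy to be "rigid", or alternatively because the period pairing on $H^0(\Omega)$ is horizontal and non-degenerate; and (ii) checking the finite extension $L$ can be chosen separable, which is automatic in characteristic $0$ (the present section) and is exactly what the Appendix must address in characteristic $p$. I would flag (i) as the crux and expect the authors to handle it via the flatness of $\nabla$ on the full tensor algebra together with finiteness of the automorphism group of a genus $\geq 2$ curve.
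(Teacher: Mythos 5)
Your genus~$\geq 2$ strategy starts out exactly as the paper does (extend $D$ to a connection $\nabla$ on $\Omega_{X/K}^{\otimes n}$, make each $H^0(X,\Omega_{X/K}^{\otimes n})$ a differential module, check $\nabla$ respects the canonical ring, take $\mathrm{Proj}$ of the horizontal subring), but you then misidentify the crux. You insist that the differential Galois group must be shown finite so that a \emph{finite} extension $L/K$ already carries a full $\mathbb{C}$-basis of horizontal sections, and you offer only vague justifications for this finiteness (``$\nabla$ preserves a lattice coming from $O_X$'', ``the period pairing is horizontal and non-degenerate''); neither of these is an argument --- the period pairing is transcendental data, and preservation of a bilinear form or of some integral structure constrains the group but does not make it finite. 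The paper needs no such finiteness. It trivializes the modules over the full Picard--Vessiot extension $U\supset K$ (which may be transcendental), using only that $U$ has field of constants $\mathbb{C}$, so that $H=\oplus_s \ker(\nabla, H^0(X\times_K U,\mathcal{L}_{ext}^{\otimes s}))$ is a graded $\mathbb{C}$-algebra with $X\times_K U\cong X_0\times_{\mathbb{C}}U$ for $X_0=\mathrm{Proj}(H)$; the finite extension is then obtained afterwards by a purely algebro-geometric specialization: the isomorphism is defined over a finitely generated $K$-subalgebra $R\subset U$, and one passes to $L=R/m$ for a maximal ideal $m$, which is finite over $K$. So the step you flag as the crux is both unproven in your outline and unnecessary; as written, your route has a genuine gap at precisely that point (a finite extension trivializing the connection exists only a posteriori, once the proposition itself is known).

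The low-genus cases are also thinner than they need to be. For genus $1$ you assert $D=cy\frac{d}{dx}$ by the Riemann--Roch count of Theorem \ref{thm:Autonomous}(2), but that count classifies $K$-linear vector fields, whereas here $D$ extends $\frac{d}{dz}$ on $K$ and is not $K$-linear; and the decisive point --- that the modulus of the curve is constant --- is not delivered by saying ``the constancy of $\omega$ forces $a,b$ constant''. The paper proves it by writing $D(x)=A(x)+B(x)y$ on a Legendre model $y^2=x(x-1)(x-a)$, imposing regularity at the point at infinity and $D(y)\in K[x,y]$, and deducing $a_0=a_1=a'=0$; the equation $a'=0$ is exactly the descent statement you need and must be computed, not asserted. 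For genus $0$ your extra claim that a M\"obius change of coordinates over a finite extension makes the Riccati coefficients constant is false in general (it would make every Riccati equation solvable over a finite extension), but it is also irrelevant: the proposition only asks that the \emph{curve} descend, and $\mathbb{P}^1_L\cong\mathbb{P}^1_{\mathbb{C}}\times_{\mathbb{C}}L$ already suffices, while the derivation is allowed to keep coefficients in $K$.
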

\begin{proof} Let $g$ be the genus of $X$.

\medskip\noindent (1) {\it Suppose that $g=0$}. Then, after possibly a quadratic extension of $K$, one has $F=K(x)$. Moreover, ``$D$ regular'' is equivalent to ``$K[x]$ and $K[x^{-1}]$ are invariant under $D$.'' Hence $D$ is regular if and only if $D$ has the form $D(x)=a_0+a_1x+a_2x^2$ with all $a_i\in K$.

\medskip\noindent (2) {\it Suppose that $g=1$}. After a finite extension of $K$ we may suppose that $X$ is given by the affine equation $y^2=x(x-1)(x-a)$ with $a\in K,\ a\neq 0,1$. The assumption on $D$ implies that $D(x)=A(x)+B(x)y$ with $A(x),B(x)\in K[x]$. The completion of $O_{X,\infty}$ has the form $K[[u]]$ where $x^{-1}=u^{2}$ and $y=u^{-3}\sqrt{(1-u^2)(1-au^2)}$. The condition $D(u)\in K[[u]]$ implies $2D(u)=u^{-1}\frac{-1}{x^2}(A(1/u^2)+B(1/u^2)y)=-u^3(A(1/u^2)+B(1/u^2) \sqrt{(1-u^2)(1-au^2)}$ has no pole. This implies that $A(x)=a_0+a_1x$ and $B(x)=b_0$.

The condition $D(y)\in K[x,y]$ implies that $2yD(y)=D(x(x-1)(x-a))\in yK[x,y]$. This expression equals
\[x(x-1)(x-a)\{(\frac{1}{x}+\frac{1}{x-1}+\frac{1}{x-a})(a_0+a_1x+b_0y)-\frac{a'}{x-a}\}.\]
Hence
\[x(x-1)(x-a)\{(\frac{1}{x}+\frac{1}{x-1}+\frac{1}{x-a})(a_0+a_1x)- \frac{a'}{x-a}\}\]
lies in $yK[x,y]\cap K[x]=x(x-1)(x-a)K[x]$. Therefore $a_0=a_1=a'=0$.

This proves the statement and we find moreover that $D$ has the special form $D(x)=by$ for some $b\in K$.

\medskip\noindent (3) {\it Suppose that $X$ is a hyperelliptic curve}. Here we give an elementary proof and note that (4) contains a more abstract proof. After a finite extension of $K$ we may suppose that the affine equation of $X$ has the form $y^2=Q$ with $Q=x(x-1)(x-a_1)\cdots (x-a_m)$ where $m>1 $ is odd and $0,1,a_1,\dots ,a_m$ are distinct elements of $K$. Then $D(x)=A+By$ with $A,B\in K[x]$.

The completion of the local ring at $\infty$ can be written as $K[[u]]$ with $x^{-1}=u^2$ and  $yu^{m+2}=\sqrt{(1-u^2)(1-u^2a_1)\cdots (1-u^2a_m)}$. The condition $D(u)\in K[[u]]$ implies that $B=0$ and $A$ has degree at most one. The condition $D(y)\in K[x,y]$ implies $2yD(y)=D(Q)\in yK[x,y]\cap K[x]= QK[x]$. Now $\frac{D(Q)}{Q}\in K[x]$ implies that $A=0$ and all $a_i'=0$.

This proves the statement and shows moreover that $D$ has the special form $D(x)=D(y)=0$.

\medskip\noindent (4) {\it Suppose that $g\geq 2$}.  The derivation $D$ on $F$ extends to a $\mathbb{C}$-linear morphism of sheaves $\nabla : \Omega _{X/K}\rightarrow \Omega _{X/K}$ by the formula $\nabla (\sum f_idg_i)= \sum _i (D(f_i)dg_i+f_id(Dg_i))$. A verification that this formula is well defined is needed.

Consider an open (affine) subset $U$ of $X$. One represents $O(U)$ as $K[X_1,\dots ,X_n]/(f_1,\dots ,f_s)=K[x_1,\dots ,x_n]$. Now $D$ lifts to a derivation $D^+$ of $K[X_1,\dots ,X_n]$ having the property $D^+(f_1,\dots ,f_s)\subset (f_1,\dots ,f_s)$. Then $\Omega _{X/K}(U)$ is the free $O(U)$-module $V$ with basis $dx_1,\dots ,dx_n$ divided out by the $O(U)$-submodule $W$ spanned by the elements $\{df_i|\ i=1,\dots ,s\}$ (or equivalently by all $df$ with $f\in (f_1,\dots ,f_s)$ ). One defines $\nabla _V$ on $V$ by $\nabla _V(\sum h_idx_i)=\sum (D(h_i)dx_i+h_id(Dx_i))$ and has to verify that  $\nabla _V(W)\subset W$. This easily follows from $D^+(f_1,\dots ,f_s)\subset (f_1,\dots ,f_s)$. There results a $\nabla$ on $V/W$ with the required formula.

Now the $K$-vector space $H^0(X,\Omega _{X/K})$ is invariant under $\nabla$. This makes $H^0(X,\Omega _{X/K})$ into a differential module over $K$. The same holds for the symmetric powers $H^0(X,\Omega _{X/K}^{\otimes s})$. Thus we found line bundles $\mathcal{L}$ which are invariant under $D$, {\it i.e.}, $D$ maps $H^0(U,\mathcal{L})$ to itself for any open $U\subset X$.

Consider such an $\mathcal{L}$ which is at the same time very ample. Then the curve $X$ is equal to $Proj (\oplus _{s\geq 0}H^0(X,\mathcal{L}^{\otimes s}))$. Each $K$-vector space  $H^0(X,\mathcal{L}^{\otimes s})$ has an action $\nabla$ of $D$ on it, which makes it into a differential module. We take a Picard-Vessiot extension $U\supset K$ that trivializes these differential modules. In fact, it suffices to trivialize the differential module $H^0(X,\mathcal{L})$ since the $H^0(X,\mathcal{L}^{\otimes s})$ are images of the $s$th symmetric powers of $H^0(X,\mathcal{L})$.

Then $X\times _KU$ is the $Proj$ of $U\otimes _{\mathbb{C}}H$ where $H=\oplus _{s\geq 0}H_s$ is the homogeneous $\mathbb{C}$-algebra, generated by $H_1$, with $H_s=ker(\nabla ,H^0(X\times _KU,\mathcal{L}_{ext}^{\otimes s}))$ for all $s\geq 0$. Put $X_0=Proj(H)$, then $X\times _KU\cong X_0\times _{\bf C}U$.

It is well known that in this case there exists also a finite extension $L$ of $K$ such that $X\times _KL\cong X_0\times _{\mathbb{C}}L$. (Indeed, for this isomorphism the field $U\supset K$ can be replaced by a finitely generated $K$-algebra $R\subset L$. Further $R$ can be replaced by $R/m$ for some maximal ideal $m$).

\medskip\noindent We add here that for $g\geq 2$, the derivation $D$ has a special form. Write $F$ (after a finite extension of $K$) as $K(x,y)$ where $\mathbb{C}(x,y)$ is the function field of $X_0$. Then $D$ is zero on $\mathbb{C}(x,y)$.
\end{proof}

\begin{corollary}
Let, as before, the differential field $(F,D)$ correspond to the differential equation $f(y',y)=0$ with coefficients in the finite extension $K$ of $\mathbb{C}(z)$. If $D$ is regular, then $f$ has the PP.
\end{corollary}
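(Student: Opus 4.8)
The plan is to combine Proposition~\ref{prop:RegularImpliesFibredProduct} with the explicit normal forms for $D$ obtained at the ends of its three cases, and then to read off the singularities of the solutions from the classical theory in each genus range. Since replacing $K$ by a finite separable extension $L$ changes neither the finiteness of the set of ramified points nor the PP --- a point of the curve $Z'$ belonging to $L$ can be ramified for $f$ only if it lies above one of the finitely many ramified points of $Z$ or above one of the finitely many branch points of $Z'\to Z$ --- we may assume that $(F,D)$ is already in one of the three normal forms: (i) $F=K(x)$ with $D(x)=a_0+a_1x+a_2x^2$, all $a_i\in K$; (ii) $F=K(x,y)$ with $y^2=x(x-1)(x-a)$ for some $a\in\mathbb{C}$ and $D(x)=by$ with $b\in K$; (iii) $F=K(x,y)$ with $\mathbb{C}(x,y)$ the function field of a curve over $\mathbb{C}$ of genus $\geq 2$ and $D\equiv 0$ on $\mathbb{C}(x,y)$. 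A solution of $f(y',y)=0$ corresponds to a $K$-linear differential homomorphism $\phi$ from $(F,D)$, or from an invariant local subring of $F$, into a differential field of germs of analytic functions (with field of constants $\mathbb{C}$); the case of nontrivial kernel makes $y$ algebraic over $K$, hence with only finitely many fixed branch points and no essential singularities, so we may take $\phi$ defined on all of $F$. Since $t$ (in the notation of \S3) is a rational function of $x,y$ with coefficients in $K$ and $\phi$ restricts to the given embedding of $K$, it suffices to control the branch points and essential singularities of $\phi(x)$ and $\phi(y)$.

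For (i), write $g=\phi(x)$, so that $g'=a_0+a_1g+a_2g^2$. If $a_2=0$ this is an inhomogeneous linear first order equation over $K$, so $g$ is meromorphic on $Z$ off the poles of $a_0,a_1$ and has at worst poles there as well. If $a_2\neq 0$, the standard substitution $g=-\tfrac{1}{a_2}\tfrac{w'}{w}$ converts the equation into a second order \emph{linear} equation over $K$; by the classical theory of such equations over the field of meromorphic functions on $Z$, any solution $w$ is single-valued and holomorphic away from the finite set $\Sigma\subset Z$ of poles of its coefficients (together with $\infty$), and around the --- fixed --- points of $\Sigma$ it is at worst multivalued and possibly carries essential singularities. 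Hence $g=-w'/(a_2w)$ has only poles as movable singularities, and all of its branch points and essential singularities lie in $\Sigma$; the same is then true of $y=\phi(t)$, so $f$ has the PP. I expect this to be the main case to get right: one must verify that the exceptional locus (poles of the $a_i$, the singular points of the linearization, the branch locus of $L/K$, and the zeros of $\Delta=\frac{df}{ds}$) is a finite subset of $Z$, and note that the essential singularities that can occur at irregular singular points are harmless precisely because those points are fixed.

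For (ii), put $g=\phi(x)$, $h=\phi(y)$, so $h^2=g(g-1)(g-a)$ and $g'=bh$, whence $\int dg/\sqrt{g(g-1)(g-a)}=\int b\,dz+\mathrm{const}$. The primitive $\int b\,dz$ is meromorphic on $Z$ off a fixed finite set, having there at worst poles and logarithmic branch points (at the poles of $b$ with nonzero residue); its remaining multivaluedness is by the period lattice of $b\,dz$ and produces no further branch points. Inverting the elliptic integral expresses $g$ through the Weierstrass $\wp$-function attached to $X_0$ evaluated at $\int b\,dz+\mathrm{const}$, so, $\wp$ being meromorphic, $g$ has only poles as movable singularities and all its branch points and essential singularities lie above the fixed finite set where $\int b\,dz$ is singular; exactly as in the Weierstrass part of the Corollary following Theorem~\ref{thm:Autonomous}, $f$ has the PP. Finally, (iii) is immediate: the constants of any target field of germs are $\mathbb{C}$ and $\mathbb{C}(x,y)\subset\ker D$, so $\phi(x),\phi(y)\in\mathbb{C}$ and $\phi(F)=\phi(K)$; thus there are no branched solutions at any point, every solution lies in $K$, i.e.\ is a meromorphic function on $Z$ with only poles, and $f$ has the PP. Combining the three cases proves the Corollary.
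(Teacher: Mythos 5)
Your reduction to the three normal forms of Proposition \ref{prop:RegularImpliesFibredProduct} and your treatment of the transcendental solutions in genus $0$ and $1$ follow essentially the paper's route (the Riccati case via the associated second order linear equation, the elliptic case via a local primitive of $b\,dz$ and the Weierstrass function), and that part is fine. The genuine gap is your treatment of the solutions algebraic over $K$. You set them aside at the outset with the claim that a solution with nontrivial kernel ``is algebraic over $K$, hence with only finitely many fixed branch points.'' Finitely many branch points per individual solution, yes; but \emph{fixed} is exactly what has to be proved: the algebraic solutions form in general an infinite family, and their branch points can move with the constant of integration. For instance $2yy'=1$ has only algebraic solutions $y=\sqrt{z-c}$, with a movable branch point at $z=c$; of course there $D$ is not regular, but it shows that ``algebraic over $K$ $\Rightarrow$ fixed branch points'' is not free --- it has to come out of the case analysis, i.e.\ one must check that \emph{every} local solution germ (algebraic or not) is of the form $-w'/(a_2w)$, resp. $\wp(\int b\,dz+c)$, resp. a specialization of the constants. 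In cases (i) and (ii) this is a harmless repair, since your analytic arguments do describe all local solution germs and not only those giving embeddings of the whole field $F$; the paper avoids the issue by arguing directly about solutions rather than about homomorphisms defined on all of $F$.

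In case (iii), however, the gap is fatal to the argument as written. There a differential homomorphism $\phi$ defined on all of $F$ cannot exist at all: $\phi$ is injective on the field $F$, while your own observation forces $\phi(x),\phi(y)\in\mathbb{C}$, contradicting the transcendence of $x$ over $\mathbb{C}$; in particular the conclusion ``$\phi(F)=\phi(K)$'' is incoherent. The correct consequence is that in genus $\geq 2$ \emph{every} solution is algebraic over $K$ --- precisely the class you dismissed without proof. What is needed (and what the paper compresses into ``the equation is equivalent to $y'=0$'') is: such a solution corresponds to a $D$-invariant closed point of $X$, and since $D=0$ on $\mathbb{C}(X_0)$ and the constants of $\overline{K}$ are $\mathbb{C}$, the invariant points are exactly those coming from $X_0(\mathbb{C})$; hence every solution is the value of $t$ at such a point and lies in the fixed finite extension $L$ of the original $K$, so its branch points lie over the fixed branch locus of $L/\mathbb{C}(z)$ and there are no essential singularities. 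With that supplied, your proof goes through and agrees with the paper's.
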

\begin{proof}
\medskip\noindent (1) {\it $F/K$ with genus 0}. Then $f$ is equivalent to the Riccati equation $y'+a_0+a_1y+a_2y^2=0$ with $a_0,a_1,a_2\in K$. As in the proof of Corollary 1.3, one shows that the only movable singularities are poles.

\medskip\noindent (2) {\it $F/K$ with genus 1}. Then $f$ is equivalent to an equation of the form $(y')^2=ay(y-1)(y-\lambda )$ with $\lambda \in \mathbb{C}\setminus \{0,1\}$ and $a\in K^*$. We rewrite the latter as $(by')^2=y(y-1)(y-\lambda )$ for a suitable $b$ in a suitable $K$. Define a new derivation $d$ on $K$ by $d(f)=bf'$. The equation $d(T)=1$ has a solution in some Picard-Vessiot extension $L=K(T)$ of $K$. In general, $K(T)$ is a transcendental extension of $K$. If the extension happens to be algebraic, then we regard the differential equation as an equation over $\mathbb{C}(T)$ in the form $(\frac{d}{dT}y)^2=y(y-1)(y-\lambda )$. Then, as one knows, the only movable singularities are poles.

If $T$ is transcendental over $K$, then  the equation is again seen as an equation over the field $\mathbb{C}(T)$. The solutions are of the form ${\cal P}(T+c)$ (with $c\in \mathbb{C}$) and have only poles and no branched points.

Take a point $z_0\in \mathbb{C}$, unramified in $K$. This yields an embedding $K\subset \mathbb{C}(\{z-z_0\})$. If the image of $b$ in this field has no pole then the equation $bf'=f$ has a solution $u \in \mathbb{C}(\{z-z_0\})$. From this one finds the solutions ${\cal P}(u+c)$ in the field $\mathbb{C}(\{z-z_0\})$. This suffices to show that the only movable singularities of the equation are poles.

\medskip\noindent (3) {\it $F/K$ with genus $\geq 2$.} After a finite extension of $K$, the equation $f$ is equivalent to $y'=0$. Clearly this equation has the PP.
\end{proof}

We summarize these results as follows.

\begin{theorem}
Consider an order one differential equation $f(y',y)=0$ with coefficients in a finite extension $K$ of $\mathbb{C}(z)$. Let $(K(X),D)$ be the differential field constructed from $f$. Then $f$ has the PP if and only if, after a finite extension of $K$, the differential field $(K(X),D)$ is isomorphic to one of the following:
\begin{itemize}
\item[(a)] $(K(u),\ (a_0+a_1u+a_2u^2)\frac{d}{du})$, with $a_0,a_1,a_2\in K$ not all zero.
\item[(b)] $(K(x,y),\ f\cdot y\frac{d}{dx})$, with $y^2=x^3+ax+b,\ a,b\in \mathbb{C}$ a non singular elliptic curve and $f\in K^*$.
\item[(c)] There is a curve $X_0$ over $\mathbb{C}$ such that $X$ is isomorphic to $X_0\times _{\mathbb{C}}K$ and $D=0$ on $\mathbb{C}(X_0)$.
\end{itemize}

\smallskip
\noindent The three above cases correspond to the following differential equations:
\begin{itemize}
\item[(a')] $y'=a_0+a_1y+a_2y^2$, with $a_0,a_1,a_2\in K$ not all zero.
\item[(b')] $(y')^2=f\cdot (y^3+ay+b)$, with $a,b\in \mathbb{C}$ and $f\in K^*$.
\item[(c')] $y'=0$.
\end{itemize}
\end{theorem}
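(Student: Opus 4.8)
The plan is to assemble the final theorem from the pieces already established, so the proof is essentially a bookkeeping argument that glues together Proposition~\ref{prop:FiniteBranchPointsImpliesRegular}, Proposition~\ref{prop:RegularImpliesFibredProduct}, and the Corollary immediately preceding the theorem. First I would recall that $f$ has the PP if and only if it has only finitely many ramified points on $Z$, which by definition is what we set out to classify. The equivalence ``$f$ has the PP $\iff$ $D$ is regular on $X$'' is then the combination of two implications: Proposition~\ref{prop:FiniteBranchPointsImpliesRegular} gives the contrapositive of one direction (a pole of $D$ forces infinitely many ramified points, hence no PP), and the Corollary gives the converse (if $D$ is regular then $f$ has the PP). Since replacing $K$ by a finite extension changes neither the PP nor regularity of $D$, I may work up to finite extension throughout.

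Granting that $D$ is regular, I would invoke Proposition~\ref{prop:RegularImpliesFibredProduct} together with the ``special form'' addenda proved there in cases (1)--(4): after a finite extension of $K$, the curve $X$ and the derivation $D$ take one of three shapes according to the genus $g$ of $X$. For $g=0$ we get $F=K(u)$ with $D(u)=a_0+a_1u+a_2u^2$, $a_i\in K$; regularity forces exactly this polynomial form (degree $\le 2$), and not all $a_i$ can vanish since $D$ is nonzero — this is case (a), equivalently the Riccati equation (a'). For $g=1$, Proposition~\ref{prop:RegularImpliesFibredProduct}(2) shows $X$ is (after finite extension) the constant elliptic curve $y^2=x(x-1)(x-\lambda)$ with $\lambda\in\mathbb{C}\setminus\{0,1\}$ and $D(x)=by$ for some $b\in K$; rescaling to Weierstrass form $y^2=x^3+ax+b$ with $a,b\in\mathbb{C}$ and writing the derivation as $f\cdot y\frac{d}{dx}$, $f\in K^*$, gives case (b), i.e. equation (b'). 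For $g\ge2$, cases (3) and (4) of Proposition~\ref{prop:RegularImpliesFibredProduct} give $X\cong X_0\times_{\mathbb{C}}K$ for some curve $X_0$ over $\mathbb{C}$ and $D=0$ on $\mathbb{C}(X_0)$ — case (c), equation (c'). Conversely, each of (a)--(c) has regular $D$ (immediate from the explicit forms: a polynomial vector field of degree $\le 2$ on $\mathbb{P}^1$ is regular, $cy\frac{d}{dx}$ on an elliptic curve is regular, and $D=0$ is trivially regular), hence by the Corollary each of (a)--(c) has the PP. This closes the equivalence.

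The only genuine subtlety is translating between the intrinsic description of $(F,D)$ and the original equation $f(y',y)=0$, i.e. identifying which affine model of $X$ carries the generators $s,t$ with $D(t)=s$. In case (a) one must check that $K(u)$ with $D(u)$ quadratic really arises from a Riccati equation: taking $u$ to be (a Möbius transform of) $t=y$, the relation $D(t)=s$ together with $D(t)=a_0+a_1t+a_2t^2$ exhibits $s$ as a polynomial in $t$, so $f(S,T)=S-(a_0+a_1T+a_2T^2)$ up to the $\overline K$-automorphism that produced the normal form; conversely starting from (a') one recovers $F=K(y)$ with $D$ of this form. The analogous check in case (b) uses $s=by'$ with $(by')^2=y^3+ay+b$, and in case (c) the equation $y'=0$ trivially gives $D=0$. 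I would spell this dictionary out briefly but not belabor it. Overall I expect no real obstacle: all the hard analysis (Lemma~\ref{lem:BranchedSolution} and the two Propositions) and the differential-Galois argument are already done, and the final theorem is a summary; the main thing to get right is the careful statement that every step is ``after a finite extension of $K$'' and that this is harmless.
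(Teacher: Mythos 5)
Your proposal is correct and follows essentially the same route as the paper, which states this theorem as a summary (``We summarize these results as follows'') whose proof is exactly the assembly you describe: Proposition~\ref{prop:FiniteBranchPointsImpliesRegular} and the Corollary give the equivalence of the PP with regularity of $D$ (both stable under finite extensions of $K$), and Proposition~\ref{prop:RegularImpliesFibredProduct} with its ``special form'' addenda in the genus $0$, $1$, and $\geq 2$ cases produces the three shapes (a)--(c) and the corresponding equations (a')--(c'). No gaps.
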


\begin{remarks}
\noindent (1) The results and proofs extend to the case where $K$ is the field of meromorphic functions on some domain of the complex sphere. Also direct limits of such fields of functions, {\it e.g}., $\mathbb{C}(\{z\})$, can be taken as base field.

\medskip\noindent (2) In Proposition \ref{prop:RegularImpliesFibredProduct}, $\mathbb{C}$ can be replaced by any algebraically closed field of characteristic 0. In fact, Matsuda has proved similar properties for more general base fields \cite{Matsuda}.

\medskip\noindent (3) For the case genus 0 and $K$ a finite extension of $\mathbb{C}(z)$ one does not need (in Proposition \ref{prop:RegularImpliesFibredProduct}) a finite extension of $K$. Indeed, $K$ is a $C_1$-field and a genus 0 curve over $K$ has a $K$-rational point.

However, one can construct a differential field $K$, a curve $X$ over $K$ of genus 0 and derivation $D$ on $K(X)$ without poles, such that a quadratic extension of $K$ is needed in Proposition \ref{prop:RegularImpliesFibredProduct}.

\medskip\noindent (4) For the genus 1 case and $K$ a finite extension of $\mathbb{C}(z)$ it can be seen that, in general, an extension of $K$ is needed.

\medskip\noindent (5) Also for genus $\geq 2$, one needs in Proposition \ref{prop:RegularImpliesFibredProduct} (in general) a finite extension of $K$. We construct some examples.

Consider a function field $\mathbb{C}(X_0)$ of a curve $X_0$ of genus $\geq 2$ on which a finite group $G\neq \{1\}$ acts faithfully and such that the curve $X_0/G$ has a strictly smaller genus. There exists a Galois extension $L\supset \mathbb{C}(z)$ with group $G$. On $R:=L\otimes \mathbb{C}(X_0)$ we let the group $G$ act by $g(a\otimes b)=g(a)\otimes g(b)$. We provide $R$ and its field of fractions $F$ with the derivation $D$ given by $D=0$ on $\mathbb{C}(X_0)$ and $D(z)=1$. Clearly $D$ commutes with the action of $G$. Moreover $\mathbb{C}(X_0)$ is the field of constants of $(F,D)$.

The ring of invariants $R^G$ and its field of fractions $F_0$ are invariant under $D$. Further $F_0$ is the function field of some curve $X$ over $\mathbb{C}(z)$ and the restriction of $D$ to $F_0$ has the properties of Proposition \ref{prop:RegularImpliesFibredProduct}. Suppose that there exists a curve $X_1$ (as in the proposition) such that $X\cong X_1\times _{\mathbb{C}}\mathbb{C}(z)$. Then the field of constants of $F_0$ is $\mathbb{C}(X_1)$. The latter is, however, equal to $\mathbb{C}(X_0)^G$ and this is the function field of the curve $X_0/G$. Now $F=L\otimes F_0$ and the two fields $F$ and $F_0$ have the same genus. Thus we found a contradiction.

\medskip\noindent (6) The equation $(y')^2=y^3+z$, which has constant $j$-invariant, does not have the PP! Indeed, consider the affine ring of the curve $\mathbb{C}(z)[s,t]$ with equation $s^2=t^3+z$ and derivation $D$ given by $D(z)=1,\ D(t)=s$. Then $D(s)=\frac{3st^2+1}{2s}$ and the latter does not belong to ${\bf C}(z)[s,t]$. As in the example of the introduction, one can make this more
explicit. A small computation shows that through every point $(y_0,z_0)$
with $z_0\neq 0$ and $y_0^3+z_0=0$ there are two branched solutions, namely
$y=y_0\pm \frac{2}{3}(z-z_0)^{3/2}+\cdots$.
\end{remarks}

\appendix
\section{Appendix}

As in Matsuda's work \cite{Matsuda}, we consider the case of differential fields $(K,\partial )$ with characteristic $p>0$. Two cases are studied here:
\begin{itemize}
\item[(1)] $\partial =0$ and
\item[(2)] $[K:K^p]=p$. We choose an element $z\in K\setminus K^p$. Then $K=K^p(z)$ and we may restrict ourselves to $\partial =\frac{d}{dz}$ ({\em i.e.}, the derivation $\partial$ is defined by $\partial (z)=1$).
\end{itemize}

\medskip The most important examples for case (2) are:
\begin{itemize}
\item[(a)] $K$ is a finite separable extension of $C(z)$, where $C$ is an algebraically closed field of characteristic $p$. The derivation $\partial$ extends  $\frac{d}{dz}$ on $C(z)$.\\
\item[(b)] $K$ is a finite separable extension of $C((z))$, where $C$ is an algebraically closed field of characteristic $p$. Further $\partial$ is the extension of $\frac{d}{dz}$ on $C((z))$.
\end{itemize}

\medskip Let $X$ be a {\it smooth}, absolutely irreducible, projective curve over $K$. Its function field is denoted by $F$. A derivation $D$ on $F$ extending $\partial$ is given which has the property that for any closed point $Q$ of $X$, the local ring $O_{X,Q}$ is invariant under $D$. {\it We will show that the classification of the possible pairs $(F,D)$ is, up to finite separable extensions of $K$, the same as in the characteristic 0 situation}.

\medskip \noindent {\it Case} (1). $D$ is a non zero element of $H^0(X,{\rm Der}_{X/K})$. It follows that there are two possibilities:

\medskip\noindent (1a) {\em Suppose that $X$ has genus 0.} After possibly replacing $K$ by a separable extension of degree 2, we may suppose that $F=K(t)$. As before one computes that $D$ must have the form $(a_0+a_1t+a_2t^2)\frac{d}{dt}$ with $a_0,a_1,a_2\in K$.

\medskip\noindent (1b) {\em Suppose that $X$ has genus 1.} The assumption that $X$ is smooth over $K$ implies that after a finite separable extension of $K$, the curve $X$ becomes an elliptic curve. Then $H^0(X,{\rm Der}_{X/K})$ has dimension 1 over $K$ and $D$ is unique up to a $K^*$-multiple. 

\medskip\noindent {\it Case} (2). \\
\medskip\noindent (2a) {\em Suppose that $X$ has genus 0.} As in case (1a), 
one obtains that, 
after possibly a separable quadratic extension of $K$, the 
function field equals $K(t)$ and $D$ has the form
 $(a_0+a_1t+a_2t^2)\frac{d}{dt}$ with $a_0,a_1,a_2\in K$.

\medskip\noindent (2b) {\em Suppose that $X$ has genus 1.} After a suitable separable extension of $K$, the function field $F$ has the form $K(s,t)$ with (compare \cite[p. 324--5]{Silverman}):\\
$s^2=t(t-1)(t-a)$ with $a\in K,\ a\neq 0,1$ for $p>2$ and for $p=2$:\\
($\alpha$) $s^2+st=t^3+a_2t^2+a_6$ with $a_6\neq 0$ or \\
($\beta$) $s^2+a_3s=t^3+a_4t+a_6$ with $a_3\neq 0$.

For $p>2$, the proof of Proposition \ref{prop:RegularImpliesFibredProduct} remains valid. Therefore $a\in K^p$ and $D$ has the form $D(t)=bs$ for some $b\in K^*$.

For $p=2$ and case ($\alpha$) one can transform the equation (using a separable extension of $K$) into $s^2+st=t^3+a$ with $a\in K^*$. Write $D(t)=A+sB$ with $A,B\in K[t]$. Then $tD(s)=s(A+Bt+t^2B)+(a+t^3)B+t^2A+a'$ and thus $A$ and $aB+a'$ are in $tK[t]$. A local parameter at $\infty$ is $u=\frac{t}{s}$. Then $D(u)=\frac{t^2A+a'+t^2Bs}{s^2}$ is supposed to have order $\geq 0$ at $\infty$. Thus both $t^2A+a'$ and $t^2Bs$ have order $\geq -6$ at $\infty$. This implies $B=0$ and thus $a'=0$ since $aB+a'\in tK[t]$. Further $A=bt$ for some $b\in K^*$. We conclude that $a\in K^2$ and $D(t)=bt$ for some $b\in K^*$ or $D=bt\frac{d}{dt}$.

For $p=2$ and case ($\beta$) one can transform the equation (using a separable extension of $K$) into $s^2+s=t^3$. The derivation $D$ has the form $D(t)=A+Bs$ with $A,B\in K[t]$ and $D(s)=t^2D(t)$. The element $u=\frac{t}{s}$ is a uniformizing parameter at $\infty$. The condition $D(u)$ has order $\geq 0$ at $\infty$ yields (after some calculation) $D(t)=a\in K$. In other words $D=a\frac{d}{dt}$ on $K(s,t)$.

\medskip\noindent (2c) {\em Suppose that the genus of $X$ is $\geq 2$.} Then the sheaf $\Omega _{X/K}^{\otimes 2}$ is very ample. The assumption on $D$ and the reasoning in the proof of Proposition \ref{prop:RegularImpliesFibredProduct} implies that $D$ induces a connection $\nabla$ on $M:=H^0(X,\Omega _{X/K}^{\otimes 2})$. As before we want to trivialize this connection, using some differential extension of the field $K$. According to \cite[Theorem 5.3]{Put}, there exists a finite separable extension $K_1\supset K$ such that the connection $K_1\otimes M$ admits a minimal Picard-Vessiot ring $A$. For notational convenience we write again $K$ for $K_1$. Put $V={\rm ker}(\nabla , A\otimes _KM)$. Thus $V$ is a vector space over the constants $K^p$ of $K$. Moreover, the natural map $A\otimes _{K^p}V\rightarrow A\otimes _KM$ is an isomorphism. As in the proof of Proposition \ref{prop:RegularImpliesFibredProduct} we find a curve $X_0$ over $K^p$ and an isomorphism $X_0\times _{K^p}A\rightarrow X\times _KA$. The ring $A$ is a local Artin ring with residue field $K$. Dividing by its maximal ideal one obtains an isomorphism $X_0\times _{K^p}K\rightarrow X$. By construction, the derivation $D$ is 0 on the function field $K^p(X_0)$. We conclude the following:\\
{\em There exists a finite separable extension $K_1\supset K$ such that $F=K(X)\subset K_1(X)=K_1(X_0)\supset K_1^p(X_0)$, where $X_0$ is a curve over $K_1^p$ and $D$ is zero on $K_1^p(X_0)$.}

\bibliography{paperorder1}
\bibliographystyle{alpha}
\end{document}